\documentclass[reqno]{amsart}
\usepackage{amsmath, amsthm, amssymb}
\usepackage[english]{babel}
\usepackage{enumitem}
\usepackage{graphicx}
\usepackage{xcolor}
\usepackage{url}
\usepackage{hyperref}
\hypersetup{pdfborder={0 0 0},colorlinks}

\newtheorem{theorem}{Theorem}[section]
\newtheorem{lemma}{Lemma}[section]

\newtheorem{corollary}{Corollary}[section]
\theoremstyle{definition}

\newtheorem{remark}{Remark}[section]

\DeclareMathOperator*{\ms}{meas}
\newcommand{\R}{\mathbb{R}}

\newcommand{\RN}{\R^N}
\newcommand{\norm}[1]{\|#1\|}

\def\essinf{\mathop {\rm ess\,inf}}

\newcommand{\Lp}[1]{L^{#1}(\RN)}
\newcommand*\diff{\,\mathrm{d}}
\newcommand{\Wp}[1]{W^{1,#1}(\RN)}

\newcommand{\eps}{\varepsilon}

\newcommand{\Linf}{L^{\infty}(\RN)}
\renewcommand{\l}{\left}
\renewcommand{\r}{\right}
\newcommand{\intr}{\int_{\RN}}

\numberwithin{equation}{section}

\title[Three solutions for parametric $p$-Laplacian equations in $\mathbb{R}^N$]{Existence of three solutions for parametric $p$-Laplacian equations in $\mathbb{R}^N$} 

\author[E. Amoroso]{Eleonora Amoroso}
\address[E. Amoroso]{Department of Engineering, University of Messina, 98166 Messina, Italy.}
\email{eleonora.amoroso@unime.it}

\author[G. D'Aguì]{Giuseppina D'Aguì}
\address[G. D'Aguì]{Department of Engineering, University of Messina, 98166 Messina, Italy.}
\email{dagui@unime.it}

\thanks{{\em 2020 Mathematics Subject Classification: }35J20; 35J92; 58E05}

\keywords{Nonlinear $p$-Laplacian equation; unbounded domain; critical point theory; multiple solutions.}

\begin{document}
	\begin{abstract}
		In this paper we deal with a nonlinear elliptic problem in the whole space $\RN$ involving the $p$-Laplacian operator with $p>N$. Under suitable assumptions on the potential term and on the nonlinearity, we establish the existence of at least three distinct weak solutions. Our approach is variational and relies on two different three critical points theorems. The results extend to the whole space some recent multiplicity theorems for $p$-Laplacian equations, and provide explicit ranges for the parameter $\lambda > 0$ ensuring the existence of multiple solutions, which are nonnegative under additional sign conditions. The nontriviality of the solutions is also discussed. Applications to problems with nonlinearities with separated variables are finally presented.
	\end{abstract}
	
	\maketitle

\section{Introduction} 
	Equations involving the $p$-Laplacian operator $\Delta_p u = \mathrm{div} (|\nabla u|^{p-2}\nabla u)$ play a central role in nonlinear analysis and have been widely investigated in recent years. Among the various settings, the study of such equations in the whole space $\RN$ is of particular interest, since it raises specific analytical challenges mainly due to the lack of compactness of the embedding of Sobolev spaces into suitable function spaces.
	 It is worth noting that the case $p < N$ in $\mathbb{R}^N$ has been widely investigated in the literature, see for instance \cite{Ambrosetti-Garcia-Peral, Barletta-Candito-Marano-Perera, Drabek-Huang, Gu-Zeng-Zhou, Wang-Sun, Zhang-Zhang} and the references therein. On the contrary, the case $p > N$ is much less explored. To the best of our knowledge, the paper by Amoroso-Bonanno-Perera \cite{Amoroso-Bonanno-Perera-2023} represents the first contribution addressing the case $p > N$ in the entire space $\RN$, obtaining results comparable to those in the bounded domain case and overcoming the issues that arise in the unbounded setting.
	
	This paper represents a further development of the study conducted in \cite{Amoroso-Bonanno-Perera-2023, Amoroso-Bonanno-Perera-2025}, where results on the existence of one, two, and infinitely many solutions were established for the following nonlinear elliptic problem
		\begin{equation} \label{or_prob}
		\begin{cases}
			-\Delta_p u + a(x) |u|^{p-2} u = \lambda f(x,u) \quad {\rm in} \;  \RN,\\
			\lim\limits_{|x| \to + \infty} u(x) =0,
		\end{cases}
	\end{equation}
	where $p > N$, $\lambda>0$ is a parameter, $a \colon \RN \to \R$ is a bounded potential with positive infimum and $f:\R^N \times \R \to \R $ is an $L^1$-Carathéodory function (see assumption \eqref{hp-problem} below for more details).

	Furthermore, the setting considered in this paper includes, as a particular case, the one studied in \cite{Bonanno-O'Regan-Vetro-2017}, where the authors established the existence of three solutions on the real line for a nonlinearity with separated variables and a constant potential, namely for the problem
	\[
	\begin{cases}
		-(|u'(x)|^{p-2} u'(x))' + B |u(x)|^{p-2} u(x) = \lambda \alpha(x) g(u(x)) \qquad \text{a.e. in}\; \R,\\
		\lim\limits_{|x| \to + \infty} u(x) =0,
	\end{cases}
	\]
	with $p > 1$,  $\lambda>0$, $B$ a positive constant, $\alpha \in L^1(\R)$ nonnegative and $g$ continuous and nonnegative. Indeed, since $N=1$ and $p>1$, this problem falls within the case $p>N$. In this sense, our results extend the framework of \cite{Bonanno-O'Regan-Vetro-2017} to higher dimensions and to variable potentials.

	Our aim is to establish sufficient conditions ensuring the existence of at least three distinct weak solutions for problem \eqref{or_prob}. The approach is variational and is based on two different three critical points theorems, due to
	Bonanno-Marano \cite{Bonanno-Marano-2010} and Averna-Bonanno \cite{Averna-Bonanno-2009}, see also Bonanno-Candito \cite{Bonanno-Candito-2008}. By applying these abstract results to the energy functional associated with problem \eqref{or_prob}, we obtain the existence of three solutions for the parameter $\lambda$ lying in suitable intervals depending explicitly on the data of the problem. We emphasize that all the constants involved (in particular, the embedding constant $k_a$ and the constant $K_R$ in \eqref{K_R}) are explicitly computable, so that the intervals of parameters are given in closed form.

	In particular, the first result (Theorem~\ref{3sol_1}) ensures the existence of three weak solutions under an algebraic condition on the nonlinear term and assuming a $p$-sublinear growth at infinity. As a consequence (Corollary~\ref{th1:nonneg}), the three solutions are nonnegative when $f(\cdot,0)$ is nonnegative almost everywhere in $\RN$. Next, by assuming further sign and growth hypotheses, but without requiring any growth condition at infinity, we obtain the existence of three nonnegative weak solutions whose norms are bounded by an explicit constant (Theorem~\ref{3sol_2}). The question of whether one of the solutions may be the trivial one is addressed in Remark~\ref{rem:nontrivial}. As an application, we also discuss the particular case in which the nonlinearity has separated variables (Corollaries~\ref{th1-SV} and \ref{th2-SV} and Theorem~\ref{th3-SV}).

	The paper is organized as follows. In Section~\ref{Sec:prel} we present the mathematical background and the variational setting of the problem, recalling also the two abstract three critical points theorems that will be used in the proofs. Section~\ref{Sec:main} is devoted to the statement and proof of the main existence results. Finally, some consequences and applications to problems with nonlinearities with separated variables are presented at the end of the paper.
	\section{Preliminaries} \label{Sec:prel}
Consider the Lebesgue space $\Lp{p}$ equipped with the classical norm $\norm{\cdot}_p = \l( \int_{\R^N} |\cdot|^p\r)^\frac{1}{p}$ and the Sobolev space $\Wp{p}$, whose usual norm is
	\begin{align*}
		\norm{u}_{1,p} = \norm{u}_p + \norm{\nabla u}_p.
	\end{align*}
	If $p>N\ge1$, $\Wp{p}$ is a reflexive real Banach space and Morrey's theorem \cite[Theorem 9.12]{Brezis-book-2011} ensures that $\Wp{p}$ is continuously embedded in  $\Linf$ and there exists a constant $k>0$ such that 
	\begin{align*}
		\norm{u}_\infty \le k \norm{u}_{1,p} \qquad \text{for all } \; u \in \Wp{p}.
	\end{align*}
	An optimal numerical estimate of this constant $k$ is presented in Amoroso-Bonanno-Perera \cite[Proposition 2.2, Remark 2.1]{Amoroso-Bonanno-Perera-2023}. Furthermore, note that if $u \in \Wp{p}$, with $N < p < \infty$, then
	\[
	\lim\limits_{|x| \to \infty} u(x) =0.
	\]
	Hence, the decay condition in \eqref{or_prob} is automatically satisfied in $\Wp{p}$, and problem \eqref{or_prob} can be rewritten as
	\begin{align} \label{problem} \tag{P$_\lambda$}
		\begin{cases}
		-\Delta_p u + a(x) |u|^{p-2} u = \lambda f(x,u) \quad {\rm in} \quad  \RN,	\\
		u \in \Wp{p},
		\end{cases}
	\end{align}
	where we assume that
	\begin{enumerate}[label=\textnormal{(H)},ref=\textnormal{H}] 
		\item\label{hp-problem} $N< p < \infty$, $a \in \Lp{\infty}$ with $a_- = \essinf\limits_{\R^N} a >0$, and \mbox{$f:\R^N \times \R \to \R $} is $L^1$-Carathéodory, namely $f(\cdot, t)$ is measurable for all $t \in \R$, $f(x, \cdot)$ is continuous for almost all $x \in \RN$ and $\sup\limits_{|t| \leq \sigma} |f(\cdot,t)|  \in \Lp{1}$ for all $\sigma>0$.
		\end{enumerate}
	Now, we endow the Sobolev space $\Wp{p}$ with the following equivalent norm
	\begin{align*}
		\norm{u}=\left( \displaystyle\intr |\nabla u(x)|^p \diff x + \displaystyle\int_{\R^N}a(x)|u(x)|^p \diff x \right)^{\frac{1}{p}},
	\end{align*}
	and	from Lemma 2.1 in Amoroso-Bonanno-Perera \cite{Amoroso-Bonanno-Perera-2023} it holds that
	\begin{equation} \label{embedding}
		\norm{u}_\infty \le k_a \norm{u} \qquad \text{for all }\, u \in \Wp{p},
	\end{equation}
	where
	\begin{equation*} 
		k_a= \l( \dfrac{1}{a_-}\r)^\frac{p-N}{p^2} 2^\frac{p-1}{p} \l(\dfrac{\Gamma\l(1+\frac{N}{2}\r)}{\pi^\frac{N}{2}}\r)^\frac{1}{p} \dfrac{1}{N^\frac{N}{p^2}} \l( \dfrac{p-1}{p-N}\r)^\frac{N(p-1)}{p^2},
	\end{equation*}
	with $\Gamma(s) = \int_0^\infty e^{-t} t^{s-1}\diff t$ being the well known Gamma function.
	
	The investigation is based on variational methods and critical point theory. With this approach, the critical points of a suitable functional, called energy functional, coincide with the weak solutions of the problem. More precisely, $u \in \Wp{p}$ is a \textit{weak solution} of \eqref{problem} if
	\[
	 \intr \l( |\nabla u|^{p-2} \nabla u \nabla v + a(x) |u|^{p-2} u v \r) \diff x = \lambda \intr f(x,u) v \diff x	\]
	for all $v \in \Wp{p}$. Set $F(x,t) = \int_0^t f(x, \xi) \diff \xi$ for every $(x,t) \in \RN \times \R$ and consider the functionals \mbox{$\Phi, \Psi \colon \Wp{p} \to \R$} given by
	\[
	\Phi(u)= \dfrac{1}{p} \norm{u}^p, \qquad \Psi(u) = \intr F(x,u(x)) \diff x,
	\]
	for all $u \in \Wp{p}$. Then, the energy functional $I_\lambda$ related to problem \eqref{problem} is defined by $I_\lambda = \Phi - \lambda \Psi$ for all $\lambda>0$. The functionals $\Phi, \Psi, I_\lambda$ are continuously G\^{a}teaux differentiable and one has
	\begin{align*}
		\Phi'(u)(v) &= \intr \l( |\nabla u|^{p-2} \nabla u \nabla v + a(x) |u|^{p-2} u v \r) \diff x, \\
		\Psi'(u)(v) &= \intr f(x,u) v \diff x,
	\end{align*}
	for any $u, v \in \Wp{p}$. Therefore, $u \in \Wp{p}$ is a weak solution of problem \eqref{problem} if and only if $u$ is a critical point of $I_\lambda$, namely $I'_\lambda(u)(v)=0$ for all \mbox{$v \in \Wp{p}$}.
	
	So, we aim to find critical points of $I_\lambda$ for some $\lambda>0$ and our main tools are two three critical points theorems that we recall in the following. The first one is obtained by Bonanno-Marano \cite[Theorem 3.6]{Bonanno-Marano-2010} and requires the coercivity of the functional $\Phi-\lambda\Psi$.

	\begin{theorem} \label{3pc1}
		Let $X$ be a reflexive real Banach space, \mbox{$\Phi :X\rightarrow\R$} be a coercive and continuously G\^{a}teaux differentiable, and sequentially weakly lower semicontinuous functional whose G\^{a}teaux derivative admits a continuous inverse on $X^*$, $\Psi:X\rightarrow \R$ be a continuously G\^{a}teaux differentiable functional whose G\^{a}teaux derivative is compact, such that
		\begin{enumerate}[label=\textnormal{(b$_\arabic*$)},ref=\textnormal{b$_\arabic*$}] 
		\item \label{b1} $\inf\limits_X \Phi=\Phi(0)=\Psi(0)=0$.
		\end{enumerate}
		Assume that there exist a positive constant $r$ and $\overline{v}\in X$, with $r<\Phi(\overline{v})$, such that
		\begin{enumerate}[label=\textnormal{(b$_\arabic*$)},ref=\textnormal{b$_\arabic*$}] \setcounter{enumi}{1}
			\item \label{b2} $\dfrac{\sup\limits_{u\in \Phi^{-1}((-\infty,r))}\Psi(u)}{r} < \dfrac{\Psi(\overline{v})}{\Phi(\overline{v})}$,
			\item \label{b3} $\Phi-\lambda\Psi$ is coercive for all $\lambda\in \Lambda_{r} = \l( \frac{\Phi(\overline{v})}{\Psi(\overline{v})},\frac{r}{\sup\limits_{u\in \Phi^{-1}((-\infty,r))}\Psi(u)} \r)$.
		\end{enumerate}
		Then, for each $\lambda\in \Lambda_{r}$, the functional $\Phi-\lambda\Psi$ admits at least three distinct critical points.
	\end{theorem}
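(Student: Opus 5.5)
This is the abstract three critical points theorem of Bonanno--Marano, quoted from \cite[Theorem 3.6]{Bonanno-Marano-2010}. I would reprove it by combining a local minimum obtained on a sublevel set of $\Phi$, a global minimum coming from coercivity, and a mountain pass between them. Fix $\lambda\in\Lambda_r$ and set $I_\lambda=\Phi-\lambda\Psi$. Since $\Psi'$ is compact and $X$ is reflexive, $\Psi$ is sequentially weakly continuous. Hence $I_\lambda$ is sequentially weakly lower semicontinuous.

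\emph{Step 1 (global minimizer).} By (b$_3$), $I_\lambda$ is coercive. Together with weak lower semicontinuity and reflexivity, this gives a global minimizer $u_1$.

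\emph{Step 2 (local minimizer in the open sublevel set).} Consider $\Phi^{-1}((-\infty,r))$. Because $\lambda<r/\sup_{\Phi<r}\Psi$, for every $u$ with $\Phi(u)<r$ we have $\lambda\Psi(u)<r$. The key point is to show that $I_\lambda$ restricted to the weakly closed set $\Phi^{-1}((-\infty,r])$ attains its minimum at an interior point. A minimizer $u_0$ exists since this set is bounded by coercivity of $\Phi$, so it is weakly compact. I would aim to show that $I_\lambda(u_0)<\inf_{\Phi(u)=r}I_\lambda$, using the strict inequality in $\lambda$ and (b$_1$) to compare with $I_\lambda(0)=0$. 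The standard route is the Ricceri-type function
\[
\varphi(r)=\inf_{\Phi(u)<r}\frac{\sup_{\Phi<r}\Psi-\Psi(u)}{r-\Phi(u)}.
\]
Taking $u=0$ gives $\varphi(r)\le \sup_{\Phi<r}\Psi/r<1/\lambda$. This yields a $u_0$ with $\Phi(u_0)<r$ such that $\Phi(u)\ge r$ implies $I_\lambda(u)>I_\lambda(u_0)$, so $u_0$ is a local minimizer of $I_\lambda$ in the norm topology. This is the main obstacle: the comparison must be done carefully so that the minimum is strict against the whole region $\{\Phi\ge r\}$, and not only against the boundary.

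\emph{Step 3 (two distinct minima).} The lower endpoint of $\Lambda_r$ gives $I_\lambda(\overline v)<0$, and (b$_2$) ensures that $\Lambda_r$ is nonempty. Using Step 2 applied to $\bar v$ (which satisfies $\Phi(\bar v)>r$), I want $I_\lambda(\bar v)<I_\lambda(u_0)$, so that the global minimum is not $u_0$. For this I would refine Step 2 to get $I_\lambda(u_0)\ge\Phi(u_0)-\lambda\sup_{\Phi<r}\Psi$ and compare, using (b$_2$) and the lower bound $\lambda>\Phi(\bar v)/\Psi(\bar v)$. If this direct comparison fails, I would argue alternatively: if $u_1\neq u_0$ we already have two local minima; otherwise $u_0$ is the global minimizer, which contradicts $I_\lambda(\bar v)<I_\lambda(u_0)$ once that inequality is secured.

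\emph{Step 4 (third critical point).} Coercivity of $I_\lambda$ gives bounded Palais--Smale sequences. Since $\Phi'$ has a continuous inverse and $\Psi'$ is compact, the relation $\Phi'(u_n)=I_\lambda'(u_n)+\lambda\Psi'(u_n)$ yields strong convergence along a subsequence, so (PS) holds. Two distinct strict local minima $u_0,u_1$ then give, by the mountain pass theorem in the Pucci--Serrin form (valid for local minima, including non-strict ones), a third critical point distinct from both.
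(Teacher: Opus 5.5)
The paper gives no proof of this statement; it is quoted from Bonanno--Marano. Your argument therefore has to stand on its own, and as written it has a genuine gap in Steps 2--3.

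\textbf{The gap in Steps 2--3.} Write $S=\sup_{\Phi^{-1}((-\infty,r))}\Psi$.

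The conclusion you aim for at the end of Step 2 is false. Ricceri's argument gives $I_\lambda(u)\ge r-\lambda S>0\ge I_\lambda(u_0)$ only on the barrier $\Phi^{-1}(r)$, not on the whole region $\{\Phi\ge r\}$. Even on the barrier you need one extra observation: each $u\in\Phi^{-1}(r)$ is a limit of points where $\Phi<r$, so that $\Psi(u)\le S$. This holds because $\Phi'(u)\ne 0$ there, since $\Phi'$ is injective and $\Phi'(0)=0$. Moreover, if your claim held on all of $\{\Phi\ge r\}$, then $u_0$ would be the global minimizer, and the inequality $I_\lambda(\bar v)<I_\lambda(u_0)$ you want in Step 3 would be impossible.

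That Step 3 inequality does not follow from the hypotheses either. Your bounds give only $I_\lambda(u_0)>\Phi(u_0)-r$ and $I_\lambda(\bar v)<0$, which cannot be compared, and your fallback argument is circular because it rests on the same inequality. Here is a counterexample:
\begin{itemize}
\item take $X=\R$, $\Phi(u)=u^2/2$, $r=1/2$, $\bar v=2$;
\item let $\Psi\in C^1$ equal $0$ on $(-\infty,0]$, $\sin^2(\pi u)$ on $[0,1]$, $\frac92(3t^2-2t^3)$ with $t=u-1$ on $[1,2]$, and $\frac92$ on $[2,+\infty)$.
\end{itemize}
All hypotheses hold: $S/r=2<9/4=\Psi(\bar v)/\Phi(\bar v)$, $\Psi$ is bounded, and $\Lambda_r=(4/9,1/2)$. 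For $\lambda=9/20$ one gets $I_\lambda(u_0)\le I_\lambda(1/2)=-13/40<-1/40=I_\lambda(2)$. In fact $u_0$ is the global minimizer. So Steps 1--3 produce only one minimum, and Step 4 never starts.

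\textbf{The missing idea.} You need a second local minimizer in the outer region $\{\Phi>r\}$, obtained from the barrier inequality
\[
\inf_{\Phi\ge r}I_\lambda\le I_\lambda(\bar v)<0<\inf_{\Phi^{-1}(r)}I_\lambda .
\]
Since $\Phi^{-1}([r,+\infty))$ is closed but not weakly closed, you cannot minimize there by weak compactness. Instead, apply Ekeland's principle to $I_\lambda$ on this closed set; $I_\lambda$ is bounded below by coercivity and weak lower semicontinuity.
\begin{itemize}
\item For small $\eps$ the $\eps$-minimizers cannot lie on $\Phi^{-1}(r)$. So they lie in the open set $\{\Phi>r\}$, and there Ekeland gives $\norm{I_\lambda'(u_\eps)}_{X^*}\le\eps$.
\item The (PS) condition you proved in Step 4 then yields $u^*$ with $I_\lambda(u^*)=\inf_{\Phi\ge r}I_\lambda<0$. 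Hence $\Phi(u^*)>r$, and $u^*$ is a local minimizer of $I_\lambda$ in $X$.
\item $u_0\ne u^*$, since $\Phi(u_0)<r<\Phi(u^*)$.
\end{itemize}
Your Step 4 then supplies the third critical point. Alternatively, use the mountain pass across $\Phi^{-1}(r)$: its level is $\ge r-\lambda S>0$, so it differs from $I_\lambda(u_0)\le 0$ and $I_\lambda(u^*)<0$. With this, the global minimizer of Step 1 is no longer needed.
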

	
	In the second three critical points theorem, obtained in 2009 by Averna-Bonanno \cite[Theorem 5.1]{Averna-Bonanno-2009}, a suitable hypothesis on the functional $\Psi$ is assumed.
	
	\begin{theorem} \label{3pc2} \textnormal{(see \cite[Theorem 3.3, Remarks 3.9, 3.10]{Bonanno-Candito-2008})}
		Let $X$ be a reflexive real Banach space, \mbox{$\Phi:X\rightarrow \R$} be a convex, coercive and continuously G\^{a}teaux differentiable functional whose derivative admits a continuous inverse on $X^*$, $\Psi:X\rightarrow \R$ be a continuously G\^{a}teaux differentiable functional whose derivative is compact, such that
		\begin{enumerate}[label=\textnormal{(e$_\arabic*$)},ref=\textnormal{e$_\arabic*$}]
			\item \label{e1} $\inf\limits_X \Phi=\Phi(0)=\Psi(0)=0;$
			\item \label{e2} for each $\lambda>0$ and for every $u_1,u_2$ which are local minima for the functional $\Phi-\lambda\Psi$, such that $\Psi(u_1)\geq 0$ and $\Psi(u_2)\geq 0$, one has
			\[\inf\limits_{s\in[0,1]}\Psi(su_1+(1-s)u_2)\geq 0.\]
		\end{enumerate}
		Assume that there are two positive constants $r_1,r_2$ and
		$\overline{v}\in X$, with \mbox{$2r_1<\Phi(\overline{v})<\dfrac{r_2}{2}$}, such that
		\begin{enumerate}[label=\textnormal{(e$_\arabic*$)},ref=\textnormal{e$_\arabic*$}] \setcounter{enumi}{2}
			\item \label{e3} $\dfrac{\sup\limits_{u\in \Phi^{-1}((-\infty,r_1))}\Psi(u)}{r_1}<\dfrac{2}{3}\dfrac{\Psi(\overline{v})}{\Phi(\overline{v})}$,
			
			\item \label{e4} $\dfrac{\sup\limits_{u\in \Phi^{-1}((-\infty,r_2))}\Psi(u)}{r_2}<\dfrac{1}{3}\dfrac{\Psi(\overline{v})}{\Phi(\overline{v})}$.
		\end{enumerate}
		Then, for each $\lambda\in \Lambda_{r_1,r_2} =
		\l( \frac{3}{2}\frac{\Phi(\overline{v})}{\Psi(\overline{v})} \, , \, \min\left\{\frac{r_1}{\sup\limits_{u\in \Phi^{-1}((-\infty,r_1))}\Psi(u)} ; \frac{r_2/2}{\sup\limits_{u\in \Phi^{-1}((-\infty,r_2))} \Psi(u)}\right\} 	\r)$ the functional $\Phi-\lambda\Psi$ admits at least three distinct critical points which lie in $\Phi^{-1}((-\infty,r_2))$.
	\end{theorem}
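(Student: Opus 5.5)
The statement is the abstract three critical points theorem recalled from \cite{Bonanno-Candito-2008, Averna-Bonanno-2009}, so the plan is to rebuild its proof. The idea is to get two distinct local minima of $I_\lambda=\Phi-\lambda\Psi$ inside the open set $\Phi^{-1}((-\infty,r_2))$, and then a third critical point of mountain pass type, all confined to that sublevel. Fix $\lambda\in\Lambda_{r_1,r_2}$. Since $\Phi'$ has a continuous inverse and $\Psi'$ is compact, $I_\lambda$ satisfies the Palais--Smale condition on bounded sets. The set $\Phi^{-1}((-\infty,r_2))$ is bounded by coercivity of $\Phi$. Moreover $\Phi$ is convex and continuous, hence weakly lower semicontinuous, and $\Psi$ is sequentially weakly continuous. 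So $I_\lambda$ attains its infimum on weakly closed bounded sets such as $\Phi^{-1}((-\infty,\rho])$.

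\emph{First local minimum.} I would use the local minimum principle of Bonanno. From $\lambda<r_1/\sup_{\Phi<r_1}\Psi$, the function $\varphi(r_1)=\inf_{\Phi(u)<r_1}\frac{\sup_{\Phi<r_1}\Psi-\Psi(u)}{r_1-\Phi(u)}$ satisfies $\varphi(r_1)\le \sup_{\Phi<r_1}\Psi/r_1<1/\lambda$, using $u=0$. This yields some $u_0$ with $\Phi(u_0)<r_1$ and $I_\lambda(u_0)<\inf_{\Phi=r_1} I_\lambda$ (one checks that $\inf_{\Phi= r_1}I_\lambda\ge r_1-\lambda\sup_{\Phi<r_1}\Psi$ via weak closure). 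Hence the minimizer $u_1$ of $I_\lambda$ on $\Phi^{-1}((-\infty,r_1])$ lies in the open set $\Phi^{-1}((-\infty,r_1))$ and is a local minimum. It satisfies $I_\lambda(u_1)\ge -\lambda\sup_{\Phi<r_1}\Psi>-r_1$.

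\emph{Second local minimum.} By (e3) and the lower end of $\Lambda_{r_1,r_2}$, we have $I_\lambda(\overline v)=\Phi(\overline v)-\lambda\Psi(\overline v)<-\tfrac12\Phi(\overline v)<-r_1$, where the last inequality uses $2r_1<\Phi(\overline v)$. Also $\Phi(\overline v)<r_2/2<r_2$. Let $u_2$ minimize $I_\lambda$ on $\Phi^{-1}((-\infty,r_2])$. Then $I_\lambda(u_2)<-r_1<\inf_{\Phi< r_1}I_\lambda$, so $u_2\neq u_1$ and indeed $\Phi(u_2)\ge r_1$. On $\Phi^{-1}(r_2)$ the upper bound on $\lambda$ gives $I_\lambda\ge r_2-\lambda\sup_{\Phi<r_2}\Psi> r_2/2>0$, again passing to the weak closure. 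Hence $u_2$ lies in the open sublevel and is a second local minimum.

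\emph{Third critical point.} This is the main obstacle, because $I_\lambda$ need not be coercive, so global mountain pass arguments are unavailable. I would work inside $\Phi^{-1}((-\infty,r_2))$. We may assume $I_\lambda(u_1)\le 0$, otherwise replace $u_1$ by $0$ when $0$ is itself the relevant minimum; in general one can take $u_1$ with $I_\lambda(u_1)\le I_\lambda(0)=0$. For $i=1,2$, $I_\lambda(u_i)\le 0$ gives $\Phi(u_i)\le\lambda\Psi(u_i)$. This forces $\Psi(u_i)\ge 0$ and $\Phi(u_i)\le \lambda\sup_{\Phi<r_2}\Psi<r_2/2$. Now (e2) applies to the segment $\gamma(s)=su_1+(1-s)u_2$. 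There $\Psi\ge0$, and convexity of $\Phi$ gives
\[
I_\lambda(\gamma(s))\le\Phi(\gamma(s))\le\max\{\Phi(u_1),\Phi(u_2)\}<r_2/2 .
\]
Thus the mountain pass level $c=\inf_\gamma\max_s I_\lambda(\gamma(s))$, taken over paths joining $u_1$ and $u_2$ in the sublevel, satisfies $c<r_2/2<\inf_{\Phi=r_2}I_\lambda$. Paths can therefore be deformed without reaching the boundary $\Phi=r_2$. A deformation argument, or the Pucci--Serrin / Ghoussoub--Preiss theorem stating that two local minima yield a third critical point (with Palais--Smale holding on the bounded region), produces a critical point $u_3\in\Phi^{-1}((-\infty,r_2))$ at level $c$ distinct from $u_1$ and $u_2$. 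The delicate points I would check carefully are the distinctness of $u_3$ when the minima are not strict, which is handled by Pucci--Serrin's refinement, and the bookkeeping ensuring $I_\lambda(u_1)\le0$.
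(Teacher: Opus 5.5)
The paper gives no proof of this theorem; it is quoted from Averna--Bonanno and Bonanno--Candito. Your reconstruction is correct and follows the same route as those sources: two local minima come from minimizing $I_\lambda$ on the sublevels $\Phi^{-1}((-\infty,r_1])$ and $\Phi^{-1}((-\infty,r_2])$, separated by $I_\lambda(u_2)\le I_\lambda(\overline{v})<-r_1<\inf_{\Phi<r_1}I_\lambda$. The third critical point comes from a mountain pass argument confined to $\Phi^{-1}((-\infty,r_2))$, where (e$_2$), the convexity of $\Phi$ and the factor $r_2/2$ give $\max_{s}I_\lambda(su_1+(1-s)u_2)<r_2/2<\inf_{\Phi=r_2}I_\lambda$.

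Two small points. Pucci--Serrin/Ghoussoub--Preiss must be invoked in its localized form, with Palais--Smale required only for sequences in the bounded set $\Phi^{-1}((-\infty,r_2))$; this is the mountain pass tool used in the cited references. Also, $I_\lambda(u_1)\le 0$ needs no case distinction, since $0$ belongs to the set over which $u_1$ minimizes.
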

	
\section{Main results} \label{Sec:main}
	In this section, we present our main results on the existence of three weak solutions. To this aim, for any $R>0$ put
	\begin{equation} \label{K_R}
		K_R= \dfrac{1}{k_a^p} \dfrac{\Gamma\l( 1 + \frac{N}{2} \r)}{ \pi^\frac{N}{2}} \l( \dfrac{R^{p-N}}{2^p - 2^{p-N} + R^p \norm{a}_\infty} \r),
	\end{equation}
	and, for any $x_0 \in \RN$, set
	\begin{align*}
		S= B(x_0,R) \setminus B\l(x_0, \frac{R}{2}\r).
	\end{align*}
	The first result guarantees the existence of three weak solutions for problem \eqref{problem} requiring an algebraic condition on the nonlinear term (hypothesis \eqref{hp:cond1}) together with a $p$-sublinear growth at infinity. 

	\begin{theorem} \label{3sol_1}
		Let \eqref{hp-problem} be satisfied and assume that there exist $x_0 \in \RN$, $R>0$ and two constants $c, d$ such that
		\begin{enumerate}[itemsep=3pt, label=\textnormal{(h$_\arabic*$)},ref=\textnormal{h$_\arabic*$}]
			\item\label{hp:cost}  $0<c<d$,
			
			\item\label{hp:segno} $F(x,t) \ge 0$ for a.e. $x \in S$ and for all $t \in [0,d]$,
			
			\item\label{hp:cond1} $ \dfrac{\displaystyle\intr \max\limits_{|t|\le c} F(x,t) \diff x}{c^p}  < K_R \dfrac{\displaystyle\int_{B\l(x_0, \frac{R}{2}\r)} F(x, d) \diff x}{d^p}$,
			
			\item\label{hp:sublin} $\lim\limits_{|t|\to+\infty} \dfrac{F(x,t)}{|t|^p}=0$ \quad for a.e. $x \in \RN$, uniformly with respect to $x$.
		\end{enumerate}
		Then, for each $\lambda\in (\lambda_1, \lambda_2)$, where
		\begin{equation} \label{Lambda}
			\lambda_1 = \frac{1}{p k_a^p K_R} \dfrac{d^p}{\displaystyle\int_{B\l(x_0, \frac{R}{2}\r)} F(x, d) \diff x} \qquad \lambda_2= 	\frac{1}{p k_a^p} \dfrac{c^p}{\displaystyle\intr \max\limits_{|t|\le c} F(x,t) \diff x},
		\end{equation}
		problem \eqref{problem} admits at least three weak solutions.
	\end{theorem}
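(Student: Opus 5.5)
We apply Theorem~\ref{3pc1} with $X=\Wp{p}$, $\Phi(u)=\frac1p\norm{u}^p$ and $\Psi(u)=\intr F(x,u)\diff x$. The structural requirements are standard. $X$ is reflexive and $\Phi$ is coercive, convex (hence sequentially weakly lower semicontinuous) and of class $C^1$. The derivative $\Phi'$ is the $p$-Laplacian-type duality operator for the norm $\norm{\cdot}$; it is uniformly monotone, hemicontinuous and coercive, so by Minty--Browder it has a continuous inverse on $X^*$. The compactness of $\Psi'$ is the delicate point in $\RN$, but it was settled in \cite{Amoroso-Bonanno-Perera-2023}. The argument goes as follows: a weakly convergent sequence is bounded in $\Linf$ by \eqref{embedding}, converges locally uniformly by Morrey/Rellich on balls, and the tails are controlled by the $L^1$ function $\sup_{|t|\le\sigma}|f(\cdot,t)|$ via dominated convergence. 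We invoke that result. Condition \eqref{b1} is immediate.

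Next we choose $r$ and $\overline v$. Put $r=\frac1p\left(\frac{c}{k_a}\right)^p$. If $\Phi(u)<r$, then $\norm{u}_\infty\le k_a\norm{u}<c$ by \eqref{embedding}, so $\Psi(u)\le\intr\max_{|t|\le c}F(x,t)\diff x$. This gives
\[
\frac{\sup_{\Phi<r}\Psi}{r}\le p k_a^p\,\frac{\intr\max_{|t|\le c}F(x,t)\diff x}{c^p}=\frac1{\lambda_2}.
\]
For $\overline v$ we take the truncated cone: $\overline v=d$ on $B(x_0,R/2)$, $\overline v=0$ outside $B(x_0,R)$, and $\overline v(x)=\frac{2d}{R}(R-|x-x_0|)$ on $S$. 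With $\omega_N=\pi^{N/2}/\Gamma(1+N/2)$ we have $|\nabla\overline v|=2d/R$ on $S$, so
\[
\norm{\overline v}^p\le \left(\tfrac{2d}{R}\right)^p\omega_N\left(R^N-\tfrac{R^N}{2^N}\right)+\norm{a}_\infty d^p\omega_N R^N=\frac{d^p\,\omega_N R^{N-p}}{1}\left(2^p-2^{p-N}+R^p\norm{a}_\infty\right),
\]
that is, $\Phi(\overline v)\le \frac{d^p}{p\,k_a^pK_R}$ by the definition \eqref{K_R}. By \eqref{hp:segno}, $F(x,\overline v)\ge0$ on $S$, and $F(x,\overline v(x))=0$ outside $B(x_0,R)$ since $F(x,0)=0$. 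Hence $\Psi(\overline v)\ge\int_{B(x_0,R/2)}F(x,d)\diff x$, and $\Phi(\overline v)/\Psi(\overline v)\le\lambda_1$. Combining with \eqref{hp:cond1} gives $\frac1{\lambda_2}<\frac1{\lambda_1}\le\frac{\Psi(\overline v)}{\Phi(\overline v)}$, which is \eqref{b2}. The interval $(\lambda_1,\lambda_2)$ is then contained in $\Lambda_r$.

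We also need $r<\Phi(\overline v)$. The bound above is only an upper bound, so we argue from below: $\norm{\overline v}\ge\norm{\overline v}_\infty/k_a=d/k_a$, hence $\Phi(\overline v)\ge\frac1p(d/k_a)^p>\frac1p(c/k_a)^p=r$ because $c<d$. (If $\Psi(\overline v)\le 0$, then \eqref{hp:cond1} would fail, since its left-hand side is nonnegative as $F(x,0)=0$; so all the quotients above make sense.)

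Finally, for \eqref{b3} we use \eqref{hp:sublin}. Fix $\lambda$ and pick $\varepsilon>0$ with $\lambda\varepsilon\, k_a^p/a_-$-type smallness; concretely, choose $\varepsilon$ with $\lambda\varepsilon<\frac{1}{2p\norm{a}_\infty}\cdot a_-$. There is $\delta>0$ with $F(x,t)\le\varepsilon|t|^p$ for $|t|>\delta$, uniformly in $x$. For $|t|\le\delta$ we have $|F(x,t)|\le\delta\sup_{|s|\le\delta}|f(x,s)|=:h_\delta(x)\in\Lp{1}$. Therefore
\[
\Psi(u)\le\varepsilon\norm{u}_p^p+\norm{h_\delta}_1\le\frac{\varepsilon}{a_-}\norm{u}^p+\norm{h_\delta}_1,
\]
so $\Phi-\lambda\Psi\ge(\frac1p-\frac{\lambda\varepsilon}{a_-})\norm{u}^p-\lambda\norm{h_\delta}_1$, which is coercive for $\varepsilon$ small. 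This is the point where the whole space matters. The $L^1$-Carathéodory integrability is exactly what makes the small-$|t|$ region harmless even though $\RN$ has infinite measure, and the positive infimum $a_-$ converts $\norm{u}_p^p$ into $\norm{u}^p$. Theorem~\ref{3pc1} then gives three critical points of $I_\lambda$, that is, three weak solutions. Apart from the compactness of $\Psi'$, which we take from the literature, the main obstacle is the exact computation of $\Phi(\overline v)$ matching $K_R$.
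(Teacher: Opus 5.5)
Your proposal is correct and follows the paper's proof essentially step by step. It uses the same application of Theorem~\ref{3pc1} with $r=\frac1p(c/k_a)^p$, the same truncated cone $\bar v$ with $\Phi(\bar v)\le \frac{d^p}{p k_a^p K_R}$ and $\Phi(\bar v)\ge\frac1p (d/k_a)^p>r$, and the same coercivity argument combining (h$_4$), the $L^1$-Carathéodory bound near $t=0$ and $a_-\norm{u}_p^p\le\norm{u}^p$. The only slip is cosmetic: your \emph{concrete} choice $\lambda\varepsilon<\frac{a_-}{2p\norm{a}_\infty}$ does not ensure $\frac1p-\frac{\lambda\varepsilon}{a_-}>0$ when $\norm{a}_\infty<\frac12$, so simply take $\varepsilon<\frac{a_-}{p\lambda}$ as the paper does.
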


\begin{proof}
	We want to prove that, by applying Theorem \ref{3pc1}, for any $\lambda \in (\lambda_1, \lambda_2)$ the energy functional $I_\lambda$ admits three critical points, which are exactly three weak solutions for problem \eqref{problem}. By standard arguments, the functionals $\Phi$ and $\Psi$ satisfy the regularity assumptions required in Theorem \ref{3pc1}, and hypothesis \eqref{b1} clearly holds true.
	In order to verify \eqref{b2}, fix $\lambda \in (\lambda_1, \lambda_2)$, which is non-empty by \eqref{hp:cond1}, and set
	\[
	r = \dfrac{1}{p} \l( \dfrac{c}{k_a} \r)^p.
	\]
	For any $u \in \Phi^{-1}((-\infty,r))$, by using \eqref{embedding} we have
	\begin{align*}
		\norm{u}_\infty \le k_a \norm{u} \le c,
	\end{align*}
	so that
	\begin{align*}
		\Phi^{-1}((-\infty,r)) \subseteq \l\{ u \in \Wp{p} \, \colon \, \norm{u}_\infty \le c  \r\}.
	\end{align*}
	Hence, one has
	\begin{align*}
		\sup\limits_{u\in \Phi^{-1}((-\infty,r))} \Psi(u) \le \sup\limits_{\norm{u}_\infty \le c} \intr F(x,u(x)) \diff x \le \intr \max\limits_{|t|\le c} F(x,t) \diff x
	\end{align*}
	and
	\begin{align} \label{dim1-cond1}
		\frac{\sup\limits_{u\in \Phi^{-1}((-\infty,r))} \Psi(u) }{r} \le p k_a^p \frac{\intr \max\limits_{|t|\le c} F(x,t) \diff x}{c^p}.
	\end{align}
	Now, we introduce $\bar{v} \colon \RN \to \R$ defined by
	\begin{equation} \label{barv}
	\bar{v}(x) = \begin{cases}
		0 &\text{if} \; x \in \RN \setminus B(x_0,R),\\
		\dfrac{2d}{R}(R-|x-x_0|) \quad & \text{if} \; x \in S,\\
		d & \text{if} \; x \in B\l(x_0, \frac{R}{2}\r).
	\end{cases}
	\end{equation}
	From Brezis \cite[Remark 4(ii), p. 265]{Brezis-book-2011} it follows that $\bar{v} \in \Wp{p}$. By the definition of $\bar{v}$, we have that
	\begin{align*}
		\Phi(\bar{v}) =& \dfrac{1}{p} \l( \int_S |\nabla \bar{v}|^p \diff x + \int_S a(x) |\bar{v}|^p \diff x + \int_{B\l(x_0, \frac{R}{2}\r)} a(x) d^p \diff x  \r)\\
		\le&  \frac{1}{p} \l[ \l(\frac{2d}{R}\r)^p \ms(S) \l(1 +  \norm{a}_\infty \l(\frac{R}{2}\r)^p \r) + d^p \norm{a}_\infty \ms\l( B\l(x_0, \frac{R}{2}\r) \r) \r]\\
		=& \frac{d^p}{p} \, \frac{\pi^\frac{N}{2}}{\Gamma\l( 1 + \frac{N}{2} \r)} \, R^N \, \l[ \frac{2^p}{R^p} \, \frac{2^N-1}{2^N} + \norm{a}_\infty \, \frac{2^N-1}{2^N} + \norm{a}_\infty \, \frac{1}{2^N} \r]\\
		=& \frac{d^p}{p} \, \frac{\pi^\frac{N}{2}}{\Gamma\l( 1 + \frac{N}{2} \r)} \, \frac{2^p-2^{p-N}+ R^p\norm{a}_\infty}{R^{p-N}},
	\end{align*}
	namely
	\begin{align} \label{dim1-cond2}
		\Phi(\bar{v}) \le \frac{d^p}{p} \, \frac{1}{k_a^p \, K_R}.
	\end{align}
	Moreover, it is easy to verify that $\Phi(\bar{v}) > r$. Indeed, since $\norm{\bar{v}}_\infty=d$, $d>c$ by hypothesis \eqref{hp:cost} and taking \eqref{embedding} into account, we obtain that
	\begin{align} \label{dim1-cost}
		\Phi(\bar{v}) = \frac{1}{p} \norm{\bar{v}}^p \ge \frac{1}{p} \frac{1}{k_a^p} \norm{\bar{v}}_\infty^p = \frac{1}{p} \frac{1}{k_a^p} d^p > \frac{1}{p} \frac{1}{k_a^p} c^p =r.
	\end{align}
	On the other hand, owing to assumption \eqref{hp:segno}, we have that
	\begin{align} 
		\Psi(\bar{v}) &= \int_S F\l( x,  \dfrac{2d}{R}(R-|x-x_0|)\r) \diff x + \int_{B\l(x_0, \frac{R}{2}\r)} F(x,d) \diff x  \notag \\
		&\ge \int_{B\l(x_0, \frac{R}{2}\r)} F(x,d) \diff x. \label{dim1-cond3}
	\end{align}
	Putting together \eqref{dim1-cond1}, \eqref{hp:cond1}, \eqref{dim1-cond2} and \eqref{dim1-cond3}, we obtain
	\begin{align*}
			\frac{\sup\limits_{u\in \Phi^{-1}((-\infty,r))} \Psi(u) }{r} &\le p k_a^p \frac{\displaystyle\intr \max\limits_{|t|\le c} F(x,t) \diff x}{c^p}\\
			& < p k_a^p K_R \frac{ \displaystyle\int_{B\l(x_0, \frac{R}{2}\r)} F(x,d) \diff x}{d^p}\\
			& \le \frac{\Psi(\bar{v})}{\Phi(\bar{v})},
	\end{align*}
	and assumption \eqref{b2} of Theorem \ref{3pc1} holds true, with $(\lambda_1, \lambda_2) \subseteq \Lambda_{r}$. Finally, we show that $I_\lambda$ is coercive for every $\lambda>0$, so that \eqref{b3} is verified. Fix $\lambda>0$ and $\eps \in \l(0, \frac{a_-}{p\lambda}\r)$. By \eqref{hp:sublin} there exists $\delta>0$ such that $F(x,t) \le \eps |t|^p$ for a.e. $x \in \RN$ and all $|t|>\delta$. On the other hand, $F(x,t) \le \delta\, m_\delta(x)$ for a.e. $x\in\RN$ and all $|t| \le \delta$, where $m_\delta = \sup_{|t|\le\delta}|f(\cdot,t)| \in \Lp{1}$ by \eqref{hp-problem}. Hence, since $a_- \norm{u}_p^p \le \norm{u}^p$, we get
	\[
	I_\lambda(u) \ge \frac{1}{p}\norm{u}^p - \lambda \eps \norm{u}_p^p - \lambda \delta \norm{m_\delta}_1 \ge \l( \frac{1}{p} - \frac{\lambda\eps}{a_-} \r) \norm{u}^p - \lambda \delta \norm{m_\delta}_1
	\]
	for all $u \in \Wp{p}$, and the coercivity of $I_\lambda$ follows. Then, Theorem \ref{3pc1} ensures that for any $\lambda \in (\lambda_1, \lambda_2)$ the energy functional $I_\lambda$ admits at least three distinct critical points, that is, problem \eqref{problem} admits at least three distinct weak solutions and the proof is complete.
\end{proof}

\begin{remark}
		Note that the left-hand side of \eqref{hp:cond1} is nonnegative, since $F(x,0)=0$. Hence, \eqref{hp:cond1} implies $\int_{B\l(x_0, \frac{R}{2}\r)} F(x, d) \diff x >0$, so that $\lambda_1$ is well defined, and the interval $(\lambda_1, \lambda_2)$ is non-empty. If $\intr \max_{|t|\le c} F(x,t) \diff x =0$, we agree that $\lambda_2 = +\infty$.
\end{remark}

Adding a hypothesis on the sign of the nonlinear term, we are able to provide information on the sign of the solutions. Indeed, we recall the following result obtained in Amoroso-Bonanno-Perera \cite[Lemma 2.2]{Amoroso-Bonanno-Perera-2023}, where, without any loss of generality, it is assumed that
\[
f(x,t)=f(x,0) \qquad  \text{for all} \; t \le 0, x \in \RN.
\]
\begin{lemma} \label{solnonneg}
	Suppose that $f(x,0)\ge0$ for almost every $x \in \RN$. Then, any weak solution of \eqref{problem} is nonnegative.
\end{lemma}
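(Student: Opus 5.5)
The plan is to test the weak formulation of \eqref{problem} with the negative part of the solution. Let $u \in \Wp{p}$ be a weak solution and set $u^- = \max\{-u,0\}$. Since truncation preserves $\Wp{p}$ (by Stampacchia's theorem, $\nabla u^- = -\nabla u\,\chi_{\{u<0\}}$ a.e.), $u^-$ is an admissible test function. Taking $v=-u^-$ in the definition of weak solution will give
\[
\intr \l( |\nabla u|^{p-2}\nabla u \nabla(-u^-) + a(x)|u|^{p-2}u(-u^-) \r)\diff x = -\lambda \intr f(x,u)\,u^- \diff x .
\]

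Next I will evaluate both sides. On the set $\{u<0\}$ we have $u=-u^-$ and $\nabla u = -\nabla u^-$. On its complement, $u^-=0$ and $\nabla u^-=0$ a.e. Hence the left-hand side equals
\[
\intr |\nabla u^-|^p \diff x + \intr a(x)|u^-|^p \diff x = \norm{u^-}^p .
\]
For the right-hand side, we use the normalization $f(x,t)=f(x,0)$ for $t\le0$. Where $u^->0$ we have $u<0$, so $f(x,u)=f(x,0)\ge 0$ there. Thus $f(x,u)u^-\ge0$ a.e., and the right-hand side is $\le 0$ because $\lambda>0$. The integral is well defined because $u\in\Linf$ by \eqref{embedding}, and the $L^1$-Carathéodory condition then gives $f(\cdot,u)\in\Lp{1}$.

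Combining these, $\norm{u^-}^p\le 0$, so $u^-=0$ and $u\ge0$ a.e. in $\RN$. Because $p>N$, $u$ is continuous, and the inequality then holds everywhere.

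I do not expect a real obstacle. The only points needing care are the admissibility of $u^-$ as a test function and the chain rule for $\nabla u^-$, both standard, and keeping track of signs. The key structural input is the normalization of $f$ on $(-\infty,0]$. Without it, $f(x,u)$ on $\{u<0\}$ would have no sign, and the argument would fail.
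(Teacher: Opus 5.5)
Your proof is correct: testing the weak formulation with $-u^-$ gives $\norm{u^-}^p = -\lambda\intr f(x,u)\,u^-\diff x \le 0$, because of the normalization $f(x,t)=f(x,0)\ge 0$ for $t\le 0$. The paper does not prove the lemma itself but cites Lemma 2.2 of Amoroso--Bonanno--Perera, and your negative-part test is the standard argument behind that result, so the approach is essentially the same.
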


Combining Theorem \ref{3sol_1} with Lemma \ref{solnonneg}, we immediately obtain the following consequence.

\begin{corollary} \label{th1:nonneg}
		Let \eqref{hp-problem} be satisfied and assume that there exist $x_0 \in \RN$, $R>0$ and two constants $c, d$ such that \eqref{hp:cost}--\eqref{hp:sublin} hold true. Moreover, suppose that
				\begin{enumerate}[itemsep=3pt, label=\textnormal{(h$_\arabic*$)},ref=\textnormal{h$_\arabic*$}]
					\setcounter{enumi}{4}
					\item $f(x,0)\ge0$ for a.e. $x \in \RN$. 
				\end{enumerate}
		Then, for every $\lambda\in (\lambda_1, \lambda_2)$, defined in \eqref{Lambda}, problem \eqref{problem} admits at least three nonnegative weak solutions.
\end{corollary}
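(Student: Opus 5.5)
The plan is to combine Theorem~\ref{3sol_1} with Lemma~\ref{solnonneg}, with one subtlety to handle: Lemma~\ref{solnonneg} is stated for the truncated nonlinearity, i.e.\ under the normalization $f(x,t)=f(x,0)$ for $t\le 0$. So I would introduce
\[
\tilde f(x,t)=\begin{cases} f(x,t) & t\ge 0,\\ f(x,0) & t<0,\end{cases}
\qquad \tilde F(x,t)=\int_0^t \tilde f(x,\xi)\diff\xi ,
\]
and apply Theorem~\ref{3sol_1} to the problem with $\tilde f$ in place of $f$. Any weak solution of this modified problem is nonnegative by Lemma~\ref{solnonneg}, since $\tilde f(x,0)=f(x,0)\ge 0$. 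On nonnegative functions $\tilde f(x,u)=f(x,u)$, so each such solution is also a nonnegative weak solution of \eqref{problem}.

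First I check that $\tilde f$ satisfies the hypotheses. It is still $L^1$-Carathéodory: measurability and continuity in $t$ are preserved, and $\sup_{|t|\le\sigma}|\tilde f(\cdot,t)|\le \sup_{|t|\le\sigma}|f(\cdot,t)|\in\Lp{1}$. Hypotheses \eqref{hp:cost}, \eqref{hp:segno}, and the right-hand side of \eqref{hp:cond1} only involve $t\in[0,d]$, where $\tilde F=F$.

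Next I compare the left-hand side of \eqref{hp:cond1} and $\lambda_2$. For $t<0$ we have $\tilde F(x,t)=f(x,0)\,t\le 0$ by (h$_5$), while $F(x,0)=0$, so $\max_{|t|\le c}\tilde F(x,t)=\max_{0\le t\le c}F(x,t)\le \max_{|t|\le c}F(x,t)$. Hence \eqref{hp:cond1} holds for $\tilde F$, and its $\lambda_2$ is at least the original one. The interval $(\lambda_1,\lambda_2)$ defined in \eqref{Lambda} is therefore contained in the interval Theorem~\ref{3sol_1} gives for $\tilde f$.

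The main point to verify is \eqref{hp:sublin} for $\tilde F$ as $t\to-\infty$, since there $\tilde F(x,t)=f(x,0)t$. The quotient $|f(x,0)|\,|t|^{1-p}$ tends to $0$ for a.e.\ $x$, but not necessarily uniformly, because $f(\cdot,0)$ is only in $L^1$. I would get around this in one of two ways. The first option is to use the one-sided bound: \eqref{hp:sublin} was used in the proof of Theorem~\ref{3sol_1} only to obtain the upper bound $F(x,t)\le\varepsilon|t|^p$ for $|t|>\delta$, and for $t<0$ we have $\tilde F\le 0$, so that bound, and hence the coercivity of $I_\lambda$, is unchanged. The second option is to note that for $t<0$ one has $\tilde F(x,t)\le 0\le \varepsilon |t|^p$ trivially, so that step of the proof goes through verbatim. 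With that settled, Theorem~\ref{3sol_1} gives three distinct weak solutions of the truncated problem for every $\lambda\in(\lambda_1,\lambda_2)$, and by the argument above these are three distinct nonnegative weak solutions of \eqref{problem}.
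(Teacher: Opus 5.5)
Your proposal is correct and follows the paper's route: the paper gets the corollary immediately by combining Theorem~\ref{3sol_1} with Lemma~\ref{solnonneg} under the normalization $f(x,t)=f(x,0)$ for $t\le 0$, which you make explicit through the truncation $\tilde f$ and the check $\max_{|t|\le c}\tilde F(x,t)\le\max_{|t|\le c}F(x,t)$. Your point that \eqref{hp:sublin} need not hold uniformly as $t\to-\infty$ for $\tilde F(x,t)=f(x,0)t$ is also right, as is your fix that coercivity only needs the one-sided bound $\tilde F(x,t)\le\eps|t|^p$, trivially true for $t<0$; this supplies a detail the paper leaves implicit here and treats explicitly only for the analogous issue in Corollary~\ref{th1-SV} (your ``two options'' are really the same observation).
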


\begin{remark} \label{rem:nontrivial}
	In Theorem \ref{3sol_1} and Corollary \ref{th1:nonneg} (as well as in Theorem \ref{3sol_2} below), if $f(\cdot,0) \not\equiv 0$, then $u \equiv 0$ is not a solution of \eqref{problem} and, therefore, all the three solutions obtained are nontrivial. If $f(\cdot,0) \equiv 0$, then $u\equiv 0$ is a solution of \eqref{problem} and the results ensure the existence of at least two nontrivial solutions.
\end{remark}

The other result on the existence of three solutions is obtained by requiring an appropriate growth on the nonlinear term in two real bounded intervals, $[c_1,d]$ and $[d,c_2]$.

\begin{theorem} \label{3sol_2}
	Let \eqref{hp-problem} be satisfied and assume that there exist $x_0 \in \RN$, $R>0$ and three positive constants $c_1, c_2, d$ such that
	\begin{enumerate}[itemsep=4pt, label=\textnormal{($h'_\arabic*$)},ref=\textnormal{$h'_\arabic*$}]
		\item\label{hp2:costanti} $2^\frac{1}{p} c_1 < d < c_2$,
				
		\item\label{hp2:segno} $f(x,t) \ge 0$ for a.e. $x \in \RN$ and for all $t \ge 0$,
		
		\item\label{hp2:cond1} $ \dfrac{\displaystyle\intr \max\limits_{|t|\le c_1} F(x,t) \diff x}{c_1^p}  < \dfrac{2}{3} K_R \dfrac{\displaystyle\int_{B\l(x_0, \frac{R}{2}\r)} F(x, d) \diff x}{d^p}$,
		
		\item\label{hp2:cond2} $ \dfrac{\displaystyle\intr \max\limits_{|t|\le c_2} F(x,t) \diff x}{c_2^p}  < \dfrac{1}{3} K_R \dfrac{\displaystyle\int_{B\l(x_0, \frac{R}{2}\r)} F(x, d) \diff x}{d^p}$.
	\end{enumerate}
	Then, for each $\lambda\in \l(\lambda'_1, \lambda'_2\r)$, where
	\begin{align*}
		\lambda'_1 &= \frac{3}{2}\frac{1}{p k_a^p K_R} \dfrac{d^p}{\displaystyle\int_{B\l(x_0, \frac{R}{2}\r)} F(x, d) \diff x}, \\
		\lambda'_2 &= \min\l\{ \frac{1}{p k_a^p} \dfrac{c_1^p}{\displaystyle\intr \max\limits_{|t|\le c_1} F(x,t) \diff x} \, , \, \frac{1}{2 p k_a^p} \dfrac{c_2^p}{\displaystyle\intr \max\limits_{|t|\le c_2} F(x,t) \diff x}\r\},
	\end{align*}
	problem \eqref{problem} admits at least three nonnegative weak solutions $u_1, u_2, u_3 \in \Wp{p}$ such that $\norm{u_i} < \dfrac{c_2}{k_a}$ and $\norm{u_i}_\infty < c_2$ for every $i=1,2,3$.
\end{theorem}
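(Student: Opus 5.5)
The plan is to apply Theorem~\ref{3pc2} to $\Phi$ and $\Psi$ on $X=\Wp{p}$, with
\[
r_1=\frac1p\Big(\frac{c_1}{k_a}\Big)^p,\qquad r_2=\frac1p\Big(\frac{c_2}{k_a}\Big)^p,
\]
and $\bar v$ the truncated cone \eqref{barv} with height $d$.

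The regularity requirements and \eqref{e1} are standard, as in the proof of Theorem~\ref{3sol_1}; $\Phi$ is also convex. Exactly as in \eqref{dim1-cond1}, the embedding \eqref{embedding} gives $\Phi^{-1}((-\infty,r_i))\subseteq\{\norm{u}_\infty\le c_i\}$, hence
\[
\sup_{\Phi<r_i}\Psi\le \intr\max_{|t|\le c_i}F(x,t)\diff x,
\qquad
\frac{\sup_{\Phi<r_i}\Psi}{r_i}\le pk_a^p\,\frac{\intr\max_{|t|\le c_i}F}{c_i^p}.
\]
For $\bar v$ we reuse \eqref{dim1-cond2}, $\Phi(\bar v)\le d^p/(pk_a^pK_R)$, and \eqref{dim1-cond3}, $\Psi(\bar v)\ge\int_{B(x_0,R/2)}F(x,d)\diff x$. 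The latter holds because \eqref{hp2:segno} gives $F(x,t)\ge0$ for $t\ge0$, and $0\le\bar v\le d$. Combining these with \eqref{hp2:cond1} and \eqref{hp2:cond2} gives \eqref{e3} and \eqref{e4}. It also shows that $(\lambda_1',\lambda_2')\subseteq\Lambda_{r_1,r_2}$; for the second entry of the minimum, note $r_2/(2\sup_{\Phi<r_2}\Psi)\ge c_2^p/(2pk_a^p\intr\max_{|t|\le c_2}F)$.

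The first inequality $2r_1<\Phi(\bar v)$ follows from \eqref{embedding} together with $2^{1/p}c_1<d$:
\[
\Phi(\bar v)\ge \frac{d^p}{pk_a^p}>\frac{2c_1^p}{pk_a^p}=2r_1 .
\]
The second inequality $\Phi(\bar v)<r_2/2$ is more delicate, since it needs an upper bound on $\Phi(\bar v)$ by $c_2^p/(2pk_a^p)$. I will derive it from the hypotheses rather than from $d<c_2$ alone. By \eqref{hp2:segno}, $\intr\max_{|t|\le c_2}F\ge\int_{B(x_0,R/2)}F(x,d)\diff x>0$, the strict positivity coming from \eqref{hp2:cond1} as in the Remark after Theorem~\ref{3sol_1}. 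Then \eqref{hp2:cond2} gives
\[
\frac{1}{c_2^p}<\frac{K_R}{3d^p},
\qquad\text{so}\qquad
\frac{d^p}{pk_a^pK_R}<\frac{c_2^p}{3pk_a^p}<\frac{r_2}{2}.
\]
Hence $\Phi(\bar v)<r_2/2$.

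The main obstacle is \eqref{e2}. The plan is to use the sign condition. Extend $f(x,t)=f(x,0)$ for $t\le0$ as before Lemma~\ref{solnonneg}; this does not alter the problem on nonnegative functions. Local minima of $\Phi-\lambda\Psi$ are critical points, so by Lemma~\ref{solnonneg} (since $f(x,0)\ge0$) they are nonnegative. For $s\in[0,1]$ the convex combination $su_1+(1-s)u_2$ is then nonnegative, and $F(x,t)\ge0$ for $t\ge0$ by \eqref{hp2:segno}. Thus $\Psi(su_1+(1-s)u_2)\ge0$, which is \eqref{e2}. I must be careful that the extension is compatible with \eqref{hp2:cond1}--\eqref{hp2:cond2}. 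For $t<0$ the extended primitive is $F(x,t)=f(x,0)t\le0$, so the maxima over $|t|\le c_i$ are unchanged.

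Theorem~\ref{3pc2} then yields three critical points in $\Phi^{-1}((-\infty,r_2))$. These are weak solutions, nonnegative by Lemma~\ref{solnonneg}, and satisfy $\norm{u_i}<(pr_2)^{1/p}=c_2/k_a$ and $\norm{u_i}_\infty\le k_a\norm{u_i}<c_2$.
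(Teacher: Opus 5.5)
Your proposal is correct and follows the paper's proof essentially step for step. It uses the same $r_1,r_2$ and $\bar v$, the same derivation of $d^p<\frac{K_R}{3}c_2^p$ from \eqref{hp2:cond2} to get $\Phi(\bar v)<r_2/2$, and the same use of Lemma~\ref{solnonneg} for \eqref{e2}. One small correction: after the extension $f(x,t)=f(x,0)$ for $t\le 0$, the maxima $\max_{|t|\le c_i}F(x,t)$ are not necessarily unchanged; they can only decrease. That is all you need, since it only weakens \eqref{hp2:cond1}--\eqref{hp2:cond2} and enlarges $\lambda'_2$.
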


\begin{proof}
	The proof is based on applying Theorem \ref{3pc2} to the functionals $\Phi, \Psi$ defined in Section \ref{Sec:prel}, which, by standard arguments, satisfy the required regularity assumptions and \eqref{e1}. Also, \eqref{e2} holds true by the definition of $\Psi$ and assumption \eqref{hp2:segno}. Indeed, fix $\lambda>0$ and consider $u_1, u_2 \in \Wp{p}$ local minima for $I_\lambda = \Phi-\lambda\Psi$; then, they are weak solutions of problem \eqref{problem} and since \eqref{hp2:segno} holds, by Lemma \ref{solnonneg} it follows that $u_1 \ge0, u_2 \ge 0$. Again from \eqref{hp2:segno}, we have that $F(x,t) \ge 0$ for a.e. $x \in \RN$ and all $t \ge 0$, which leads to $\Psi(u_1)\ge0$ and $\Psi(u_2)\ge0$. Moreover,
	\[
	s u_1 + (1-s) u_2 \ge 0 \qquad \text{for any} \; s \in [0,1],
	\]
	which yields
	\[
	F(x,s u_1 + (1-s) u_2) \ge 0 \qquad \text{for a.e.} \; x \in \RN,
	\]
	and
	\[
	\Psi(s u_1 + (1-s) u_2) \ge 0 \qquad \text{for any} \; s \in [0,1].
	\]
	Now, in order to prove \eqref{e3} and \eqref{e4} we proceed as in the proof of Theorem \ref{3sol_1}. Fix $\lambda \in (\lambda'_1, \lambda'_2)$, which is non-empty thanks to \eqref{hp2:cond1}--\eqref{hp2:cond2}, and set $r_1 = \dfrac{1}{p} \l( \dfrac{c_1}{k_a} \r)^p$ and $r_2 = \dfrac{1}{p} \l( \dfrac{c_2}{k_a} \r)^p$. Therefore, by applying \eqref{embedding}, it follows that
	\begin{align*}
		&\norm{u}_\infty \le k_a \norm{u} \le c_1 \qquad \text{for all} \; u \in \Phi^{-1}((-\infty, r_1)),\\
		&\norm{u}_\infty \le k_a \norm{u} \le c_2 \qquad \text{for all} \; u \in \Phi^{-1}((-\infty, r_2)).
	\end{align*}
	Then, taking $\bar{v} \in \Wp{p}$ defined as in \eqref{barv}, from assumptions \eqref{hp2:cond1} and \eqref{hp2:segno} we obtain
	\begin{align*}
		\frac{\sup\limits_{u\in \Phi^{-1}((-\infty,r_1))} \Psi(u) }{r_1} &\le p k_a^p \frac{\displaystyle\intr \max\limits_{|t|\le c_1} F(x,t) \diff x}{c_1^p}\\
		& < \frac{2}{3} p k_a^p K_R \frac{\displaystyle \int_{B\l(x_0, \frac{R}{2}\r)} F(x,d) \diff x}{d^p}\\
		& \le\frac{2}{3} \frac{\Psi(\bar{v})}{\Phi(\bar{v})}.
	\end{align*}
	Similarly, by using assumptions  \eqref{hp2:cond2} and \eqref{hp2:segno} we have
	\begin{align*}
		\frac{\sup\limits_{u\in \Phi^{-1}((-\infty,r_2))} \Psi(u) }{r_2} &\le p k_a^p \frac{\displaystyle\intr \max\limits_{|t|\le c_2} F(x,t) \diff x}{c_2^p}\\
		& < \frac{1}{3} p k_a^p K_R \frac{\displaystyle \int_{B\l(x_0, \frac{R}{2}\r)} F(x,d) \diff x}{d^p}\\
		& \le\frac{1}{3} \frac{\Psi(\bar{v})}{\Phi(\bar{v})}.
	\end{align*}
	Finally, we show that $2r_1 < \Phi(\bar{v}) < \frac{r_2}{2}$. On the one hand, from \eqref{hp2:costanti} and \eqref{dim1-cost} one has
	\[
	2 r_1 = \frac{2}{p} \frac{c_1^p}{k_a^p} < \frac{1}{p}\frac{d^p}{k_a^p} \le \Phi(\bar{v}).
	\]
	On the other hand, by \eqref{hp2:segno}, for a.e. $x \in \RN$ the function $F(x,\cdot)$ is nonnegative and nondecreasing in $[0,+\infty)$. Hence, since $d<c_2$, we get
	\[
	\int_{B\l(x_0, \frac{R}{2}\r)} F(x,d) \diff x \le \intr F(x,c_2) \diff x \le \intr \max\limits_{|t|\le c_2} F(x,t) \diff x,
	\]
	which, together with \eqref{hp2:cond2}, gives
	\[
	\frac{1}{c_2^p} \int_{B\l(x_0, \frac{R}{2}\r)} F(x,d) \diff x < \frac{K_R}{3} \, \frac{1}{d^p} \int_{B\l(x_0, \frac{R}{2}\r)} F(x,d) \diff x.
	\]
	Since the left-hand side of \eqref{hp2:cond1} is nonnegative, \eqref{hp2:cond1} implies $\int_{B\l(x_0, \frac{R}{2}\r)} F(x,d) \diff x > 0$, so that $d^p < \frac{K_R}{3} c_2^p$. Therefore, by \eqref{dim1-cond2},
	\[
	\Phi(\bar{v}) \le \frac{1}{p} \frac{d^p}{k_a^p K_R} < \frac{1}{3p} \frac{c_2^p}{k_a^p} < \frac{r_2}{2}.
	\]
	Hence, \eqref{e3} and \eqref{e4} are verified. Therefore, by applying Theorem \ref{3pc2} we get that for any $\lambda \in (\lambda'_1, \lambda'_2)$ problem \eqref{problem} admits at least three distinct weak solutions such that $\norm{u_i} < \dfrac{c_2}{k_a}$ and $\norm{u_i}_\infty < c_2$ for every $i=1,2,3$. Also, the solutions are nonnegative thanks to \eqref{hp2:segno} and Lemma \ref{solnonneg}.
\end{proof}

In the following, we highlight some consequences in the particular case where the nonlinearity has separated variables, namely
\[
f(x,t)=h(x) g(t) \qquad \text{for all} \; x \in \RN, t \in \R.
\]
We specialize the existence results presented so far to the following problem
	\begin{equation} \label{problem-SV} \tag{$\tilde{P}_\lambda$}
	\begin{cases}
		-\Delta_p u + a(x) |u|^{p-2} u = \lambda h(x) g(u) \quad {\rm in} \quad  \RN,	\\
		u \in \Wp{p}, u \ge 0,
	\end{cases}
\end{equation}
assuming that
\begin{enumerate}[label=\textnormal{($\tilde{H})$},ref=\textnormal{$\tilde{H}$}] 
	\item\label{hp-problem-SV} $N< p < \infty$, $a \in \Lp{\infty}$ with $a_- = \essinf\limits_{\R^N} a >0$,  $h \in \Lp{1}$ with $h \ge 0$, $h \not\equiv 0$, and $g \in C(\R)$ with $g \ge 0$, $g \not\equiv 0$ in $[0,+\infty)$.
\end{enumerate}

\vspace{3pt}

Now, setting $G(t)=\int_0^t g(\xi) \diff \xi$ for any $t \in \R$, we have that $F(x,t)=h(x)G(t)$ for all $(x,t) \in \RN \times \R$. Moreover, since $g$ is nonnegative, it follows that $G$ is nondecreasing and nonnegative in $[0,+\infty)$, so that $\max_{|t|\le c} G(t) = G(c)$ for every $c>0$. Since $h \ge 0$ and $h \not\equiv 0$, we can fix $x_0 \in \RN$ and $R>0$ such that $\int_{B\l(x_0, \frac{R}{2}\r)} h(x) \diff x >0$, and put
\begin{align*}
	\tilde{K}_R = K_R \dfrac{\displaystyle\int_{B\l(x_0, \frac{R}{2}\r)} h(x) \diff x}{\norm{h}_1},
\end{align*}
where $K_R$ is given in \eqref{K_R}. Clearly, $0<\tilde{K}_R \le K_R$. The following result follows from Corollary \ref{th1:nonneg}.
	\begin{corollary} \label{th1-SV}
	Let \eqref{hp-problem-SV} be satisfied and assume that there exist two constants $c, d$ such that
	\begin{enumerate}[itemsep=3pt, label=\textnormal{($\tilde{h}_\arabic*$)},ref=\textnormal{$\tilde{h}_\arabic*$}]
				\setcounter{enumi}{1}
		\item[\textnormal{(${h}_1$)}] $0<c<d$,
		
		\item
		$ \dfrac{G(c)}{c^p}  < \tilde{K}_R \dfrac{G(d)}{d^p}$,
		
		\item
		$\lim\limits_{t \to+\infty} \dfrac{G(t)}{t^p}=0$.
	\end{enumerate}
	Then, for each $\lambda\in (\tilde{\lambda}_1, \tilde{\lambda}_2)$, where
	\begin{equation*} 
		\tilde{\lambda}_1 = \frac{1}{p k_a^p \norm{h}_1 \tilde{K}_R} \dfrac{d^p}{G(d)} \qquad
		 \tilde{\lambda}_2= 	\frac{1}{p k_a^p \norm{h}_1} \dfrac{c^p}{G(c)},
	\end{equation*}
	problem \eqref{problem-SV} admits at least three weak solutions.
\end{corollary}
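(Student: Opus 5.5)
The plan is to deduce Corollary \ref{th1-SV} directly from Corollary \ref{th1:nonneg}, applied with $f(x,t)=h(x)g(t)$, so that $F(x,t)=h(x)G(t)$. We keep the point $x_0$ and radius $R$ already fixed with $\int_{B(x_0,R/2)}h>0$, and the same constants $c,d$. First, hypothesis \eqref{hp-problem} follows from \eqref{hp-problem-SV}. Indeed, $f$ is Carathéodory, and $\sup_{|t|\le\sigma}|f(\cdot,t)|=h(\cdot)\max_{|t|\le\sigma}|g(t)|\in L^1(\RN)$ because $h\in L^1(\RN)$ and $g$ is continuous. Following the convention recalled before Lemma \ref{solnonneg}, we replace $g(t)$ by $g(0)$ for $t\le0$. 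This is harmless because the solutions we obtain will be nonnegative, so they solve \eqref{problem-SV} for the original $g$.

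Next we verify the hypotheses one at a time. Condition (h$_1$) is assumed. For (h$_2$), note that $G\ge0$ on $[0,+\infty)$ and $h\ge0$, hence $F(x,t)\ge0$ for all $t\in[0,d]$, in particular on $S$. For (h$_5$), $f(x,0)=h(x)g(0)\ge0$.

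For (h$_3$) we use the monotonicity of $G$. After the modification, $G(t)=g(0)t\le0$ for $t\le0$, while $G$ is nondecreasing and nonnegative on $[0,+\infty)$. Therefore $\max_{|t|\le c}F(x,t)=h(x)G(c)$, which gives
\[
\intr \max_{|t|\le c}F(x,t)\diff x=\norm{h}_1G(c), \qquad \int_{B(x_0,R/2)}F(x,d)\diff x=G(d)\int_{B(x_0,R/2)}h\diff x .
\]
Dividing by $\norm{h}_1$, condition (h$_3$) becomes $G(c)/c^p<\tilde K_R\,G(d)/d^p$, which is the second assumption of the corollary.

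For (h$_4$), the quotient $F(x,t)/|t|^p=h(x)G(t)/|t|^p$ has to tend to $0$ uniformly in $x$. As $t\to+\infty$ this comes from the third assumption. As $t\to-\infty$, the quotient equals $h(x)g(0)|t|^{1-p}$, which tends to $0$ since $p>1$.

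The main obstacle is the uniformity in $x$, because $h$ is only in $L^1$ and not in $L^\infty$. The way I would handle it is to go back to the coercivity step in the proof of Theorem \ref{3sol_1} rather than rely on (h$_4$) literally. Fix $\varepsilon>0$ and take $\delta$ with $G(t)\le\varepsilon t^p$ for $t>\delta$. Then
\[
\Psi(u)\le \norm{h}_1\bigl(\varepsilon\norm{u}_\infty^p+G(\delta)\bigr)\le \norm{h}_1\bigl(\varepsilon k_a^p\norm{u}^p+G(\delta)\bigr),
\]
using \eqref{embedding}. This bound yields coercivity of $I_\lambda$ for every $\lambda>0$, and the rest of that proof goes through unchanged.

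With all hypotheses in place, Corollary \ref{th1:nonneg} gives three nonnegative weak solutions. Substituting the two integral identities above into \eqref{Lambda}, together with $\norm{h}_1\tilde K_R=K_R\int_{B(x_0,R/2)}h$, turns $\lambda_1,\lambda_2$ into $\tilde\lambda_1,\tilde\lambda_2$.
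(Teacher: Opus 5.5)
Your proposal is correct and follows the paper's proof essentially verbatim. You apply Corollary \ref{th1:nonneg} to $f(x,t)=h(x)g(t)$ via the identities $\intr \max_{|t|\le c}F = \norm{h}_1G(c)$ and $\int_{B(x_0,R/2)}F(x,d)=G(d)\int_{B(x_0,R/2)}h$, and, because $h$ may be unbounded, you replace (h$_4$) by the direct coercivity bound $\Psi(u)\le \norm{h}_1 G(\delta)+\eps\norm{h}_1k_a^p\norm{u}^p$. The one detail to state explicitly is that $\eps$ must be chosen smaller than $1/(p\lambda k_a^p\norm{h}_1)$ for this bound to give coercivity.
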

\begin{proof}
	It suffices to apply Corollary \ref{th1:nonneg} with $f(x,t)=h(x)g(t)$, observing that $\intr \max_{|t|\le c} F(x,t) \diff x = \norm{h}_1 G(c)$ and $\int_{B\l(x_0, \frac{R}{2}\r)} F(x,d) \diff x = G(d) \int_{B\l(x_0, \frac{R}{2}\r)} h(x) \diff x$. We only point out that, since $h$ may be unbounded, hypothesis \eqref{hp:sublin} is not directly implied by $(\tilde{h}_3)$; however, the coercivity of $I_\lambda$ still holds. Indeed, fix $\lambda>0$ and $\eps \in \l(0, \frac{1}{p\lambda k_a^p \norm{h}_1}\r)$; by $(\tilde{h}_3)$ there is $\delta>0$ such that $G(t) \le \eps t^p$ for all $t>\delta$, while $G(t) \le 0$ for $t \le 0$ and $G(t) \le G(\delta)$ for $t \in [0,\delta]$. Hence, by \eqref{embedding},
	\[
	\Psi(u) \le \norm{h}_1 G(\delta) + \eps \intr h(x) |u(x)|^p \diff x \le \norm{h}_1 G(\delta) + \eps \norm{h}_1 k_a^p \norm{u}^p,
	\]
	so that $I_\lambda(u) \ge \l( \frac{1}{p} - \lambda \eps k_a^p \norm{h}_1 \r) \norm{u}^p - \lambda \norm{h}_1 G(\delta)$ for all $u \in \Wp{p}$.
\end{proof}
The next result is obtained by Theorem \ref{3sol_2}.
\begin{corollary}  \label{th2-SV}
	Let \eqref{hp-problem-SV} be satisfied and assume that there exist three positive constants $c_1, c_2, d$ such that
	\begin{enumerate}[itemsep=4pt, label=\textnormal{($\tilde{h}'_\arabic*$)},ref=\textnormal{$\tilde{h}'_\arabic*$}]
		\setcounter{enumi}{1}
		\item[\textnormal{($h'_1$)}] $2^\frac{1}{p} c_1 < d < c_2$,
		
		\item \label{hp2:cond1-SV}
		$ \dfrac{G(c_1)}{c_1^p}  < \dfrac{2}{3} \tilde{K}_R \dfrac{G(d)}{d^p}$,
		
		\item \label{hp2:cond2-SV}
		$ \dfrac{G(c_2)}{c_2^p}  < \dfrac{1}{3} \tilde{K}_R \dfrac{G(d)}{d^p}$.
	\end{enumerate}
	Then, for each $\lambda\in \l(\tilde{\lambda}'_1, \tilde{\lambda}'_2\r)$, where
	\begin{align*}
		\tilde{\lambda}'_1 &= \frac{3}{2}\frac{1}{p k_a^p \norm{h}_1 \tilde{K}_R} \dfrac{d^p}{G(d)}, \\
		\tilde{\lambda}'_2 &= \min\l\{ \frac{1}{p k_a^p \norm{h}_1} \dfrac{c_1^p}{G(c_1)} \, , \, \frac{1}{2 p k_a^p \norm{h}_1} \dfrac{c_2^p}{G(c_2)}\r\},
	\end{align*}
	problem \eqref{problem-SV} admits at least three nonnegative weak solutions $u_1, u_2, u_3 \in \Wp{p}$ such that $\norm{u_i} < \dfrac{c_2}{k_a}$ and $\norm{u_i}_\infty < c_2$ for every $i=1,2,3$.
\end{corollary}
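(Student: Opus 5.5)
The plan is to deduce Corollary \ref{th2-SV} directly from Theorem \ref{3sol_2} with $f(x,t)=h(x)g(t)$, by checking its hypotheses and identifying the parameter interval. Hypothesis \eqref{hp-problem} follows from \eqref{hp-problem-SV}. Indeed, $f(\cdot,t)=h(\cdot)g(t)$ is measurable and $f(x,\cdot)$ is continuous. Moreover $\sup_{|t|\le\sigma}|f(\cdot,t)|\le h(\cdot)\max_{|t|\le\sigma}|g(t)|\in\Lp{1}$, because $h\in\Lp{1}$ and $g$ is continuous. We choose $x_0$ and $R$ as fixed before the definition of $\tilde K_R$, so that $\int_{B(x_0,R/2)}h\,\diff x>0$. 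Condition \eqref{hp2:costanti} is exactly $(h'_1)$. Condition \eqref{hp2:segno} holds because $h\ge 0$ and $g\ge0$ on $[0,+\infty)$.

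Next I would rewrite the integral quantities. Since $G$ is nonnegative and nondecreasing on $[0,+\infty)$, we have $G(t)\le G(c)$ for $0\le t\le c$. For $t<0$ I would use the normalization $f(x,t)=f(x,0)$, i.e.\ $g(t)=g(0)$, which gives $G(t)=g(0)t\le 0$. Hence $\max_{|t|\le c}F(x,t)=h(x)G(c)$ and $\intr\max_{|t|\le c}F(x,t)\diff x=\norm{h}_1G(c)$ for $c=c_1,c_2$. Also $\int_{B(x_0,R/2)}F(x,d)\diff x=G(d)\int_{B(x_0,R/2)}h\,\diff x=G(d)\norm{h}_1\tilde K_R/K_R$.

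Substituting these identities, \eqref{hp2:cond1} becomes $\norm{h}_1G(c_1)/c_1^p<\tfrac23 K_R\cdot \norm{h}_1\tilde K_R G(d)/(K_R d^p)$. Dividing by $\norm{h}_1>0$ gives exactly \eqref{hp2:cond1-SV}, and in the same way \eqref{hp2:cond2} reduces to \eqref{hp2:cond2-SV}. The same substitution turns $\lambda_1'$ into $\tilde\lambda_1'$ and $\lambda_2'$ into $\tilde\lambda_2'$. If $G(c_i)=0$, the corresponding term is read as $+\infty$, consistently with the convention in the remark after Theorem \ref{3sol_1}.

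Theorem \ref{3sol_2} then yields three nonnegative weak solutions with $\norm{u_i}<c_2/k_a$ and $\norm{u_i}_\infty<c_2$. These are solutions of \eqref{problem-SV}, where the constraint $u\ge0$ is satisfied. The only delicate point is the behaviour of $g$ on negative values. The hypotheses constrain $g$ only on $[0,+\infty)$, and Lemma \ref{solnonneg} relies on the normalization $g(t)=g(0)$ for $t\le 0$. Since we seek nonnegative solutions, this normalization does not change the problem, and under it the maximum of $G$ on $[-c,c]$ is $G(c)$. No growth condition at infinity is needed, in contrast with Corollary \ref{th1-SV}.
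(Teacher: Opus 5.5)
Your proposal is correct and takes the same approach as the paper, which obtains the corollary by applying Theorem~\ref{3sol_2} with $f(x,t)=h(x)g(t)$ and using exactly your identities $\intr \max_{|t|\le c}F(x,t)\diff x=\norm{h}_1G(c)$ and $\int_{B(x_0,R/2)}F(x,d)\diff x=G(d)\int_{B(x_0,R/2)}h\diff x$. Your explicit check of \eqref{hp-problem} and your remark on normalizing $g$ on $(-\infty,0]$ supply details the paper leaves implicit; neither changes the argument.
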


Finally, we provide an existence result requiring only that $G$ is $p$-superlinear at zero and $p$-sublinear at infinity.

\begin{theorem}  \label{th3-SV}
	Let \eqref{hp-problem-SV} be satisfied and assume that
	\begin{enumerate}[itemsep=4pt, label=\textnormal{($\tilde{h}'_\arabic*$)},ref=\textnormal{$\tilde{h}'_\arabic*$}]
		\setcounter{enumi}{3}
		\item\label{hp:limit-SV}
		$ \lim\limits_{t \to 0^+}\dfrac{G(t)}{t^p}  = \lim\limits_{t \to + \infty }\dfrac{G(t)}{t^p} =0 $.
	\end{enumerate}
	Then, for each $\lambda > \lambda^*$, where
	\begin{align*}
		\lambda^* = \frac{3}{2}\frac{1}{p k_a^p \norm{h}_1 \tilde{K}_R} \inf_{d>0} \dfrac{d^p}{G(d)},
	\end{align*}
	problem \eqref{problem-SV} admits at least three nonnegative weak solutions.
\end{theorem}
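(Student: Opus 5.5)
Given $\lambda>\lambda^*$, the plan is to choose $c_1<d<c_2$ so that Corollary \ref{th2-SV} applies and $\lambda\in(\tilde{\lambda}'_1,\tilde{\lambda}'_2)$. First note that $\lambda^*$ is finite. Since $g\ge0$, $g\not\equiv0$ on $[0,+\infty)$, we have $G(d)>0$ for all large $d$, so $\inf_{d>0} d^p/G(d)$ is taken over a nonempty set on which the quotient is finite (with $d^p/G(d)=+\infty$ when $G(d)=0$). Because $\lambda>\lambda^*$ and the infimum is strict in the relevant sense, we can fix $d>0$ with $G(d)>0$ and
\[
\frac{3}{2}\frac{1}{p k_a^p \norm{h}_1 \tilde{K}_R} \frac{d^p}{G(d)}<\lambda,
\]
that is, $\tilde{\lambda}'_1<\lambda$.

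Next we choose $c_1$ using the behaviour at zero. By $\lim_{t\to0^+}G(t)/t^p=0$, we pick $c_1\in(0,2^{-1/p}d)$ so small that two conditions hold:
\[
\frac{G(c_1)}{c_1^p}<\min\l\{\frac{2}{3}\tilde{K}_R\frac{G(d)}{d^p},\ \frac{1}{p k_a^p\norm{h}_1\lambda}\r\}.
\]
The first bound is $(\tilde{h}'_2)$. The second gives $\lambda<\frac{1}{pk_a^p\norm{h}_1}\frac{c_1^p}{G(c_1)}$, with the convention that this equals $+\infty$ when $G(c_1)=0$.

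We choose $c_2$ symmetrically using the behaviour at infinity. By $\lim_{t\to+\infty}G(t)/t^p=0$, we pick $c_2>d$ so large that
\[
\frac{G(c_2)}{c_2^p}<\min\l\{\frac{1}{3}\tilde{K}_R\frac{G(d)}{d^p},\ \frac{1}{2p k_a^p\norm{h}_1\lambda}\r\}.
\]
The first bound is $(\tilde{h}'_3)$. The second yields $\lambda<\frac{1}{2pk_a^p\norm{h}_1}\frac{c_2^p}{G(c_2)}$. Then $(h'_1)$ holds, and $\lambda<\tilde{\lambda}'_2$.

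Corollary \ref{th2-SV} then gives three nonnegative weak solutions. The only delicate point, and it is minor, is bookkeeping. We must check that the inequality $\lambda>\lambda^*$ lets us pick an admissible $d$ with $G(d)>0$. We must also handle the degenerate case $G(c_i)=0$ via the convention $\tilde{\lambda}'_2=+\infty$ for that term, which the proof of Theorem \ref{3sol_2} tolerates since the sup bounds then hold trivially.
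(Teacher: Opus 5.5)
Your proposal is correct and follows essentially the same route as the paper: fix $d$ with $G(d)>0$ and $\tilde{\lambda}'_1<\lambda$, pick $c_1\in(0,2^{-1/p}d)$ and $c_2>d$ from the two limits in $(\tilde{h}'_4)$, and invoke Corollary \ref{th2-SV}. The only difference is cosmetic: your minima are redundant. The choice of $d$ is equivalent to $\frac{1}{\lambda p k_a^p\norm{h}_1}<\frac{2}{3}\tilde{K}_R\frac{G(d)}{d^p}$, and the paper uses this to obtain $(\tilde{h}'_2)$ and $(\tilde{h}'_3)$ directly from the single bounds $\frac{1}{\lambda p k_a^p\norm{h}_1}$ and $\frac{1}{2\lambda p k_a^p\norm{h}_1}$.
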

\begin{proof}
	First of all, note that by \eqref{hp-problem-SV} one has $G(d)>0$ for $d$ large enough, so that $\lambda^*<+\infty$ (with the convention $\frac{d^p}{G(d)}=+\infty$ if $G(d)=0$). Now, fix $\lambda > \lambda^*$, so there exists $d>0$ such that
	\[
		\lambda> \frac{3}{2}\frac{1}{p k_a^p \norm{h}_1 \tilde{K}_R} \dfrac{d^p}{G(d)}.
	\]
	From the first part of hypothesis \eqref{hp:limit-SV} it follows that there is $c_1 \in \l(0, \frac{d}{2^\frac{1}{p}}\r)$ such that
	\[
		\dfrac{G(c_1)}{c_1^p} < \dfrac{1}{\lambda} \dfrac{1}{p k_a^p \norm{h}_1} < \dfrac{2}{3} \tilde{K}_R \dfrac{G(d)}{d^p},
	\]
	where the last inequality follows from the choice of $d$; namely, assumption \eqref{hp2:cond1-SV} is verified. On the other hand, from the second part of \eqref{hp:limit-SV} we get that there exists $c_2 > d$ such that
	\[
		\dfrac{G(c_2)}{c_2^p} < \dfrac{1}{2} \dfrac{1}{\lambda} \dfrac{1}{p k_a^p \norm{h}_1} < \dfrac{1}{3} \tilde{K}_R \dfrac{G(d)}{d^p},
	\]
	which implies that assumption \eqref{hp2:cond2-SV} holds true. Finally, by the choice of $c_1$ and $c_2$, assumption \eqref{hp2:costanti} is satisfied. Moreover, $\lambda \in (\tilde{\lambda}'_1, \tilde{\lambda}'_2)$: indeed, $\lambda > \tilde{\lambda}'_1$ by the choice of $d$, while the inequalities satisfied by $\frac{G(c_1)}{c_1^p}$ and $\frac{G(c_2)}{c_2^p}$ give $\lambda < \tilde{\lambda}'_2$. Hence, Corollary \ref{th2-SV} guarantees the existence of at least three nonnegative weak solutions for any $\lambda > \lambda^*$.
\end{proof}

\begin{remark}
	It is possible to replace assumption \eqref{hp:limit-SV} with
	\begin{enumerate}[itemsep=4pt, label=\textnormal{($\tilde{h}'_\arabic*$)},ref=\textnormal{$\tilde{h}'_\arabic*$}]
		\setcounter{enumi}{4}
		\item 
		$ \lim\limits_{t \to 0^+}\dfrac{g(t)}{t^{p-1}}  = \lim\limits_{t \to + \infty }\dfrac{g(t)}{t^{p-1}} =0 $.
	\end{enumerate}
\end{remark}

\begin{remark}
	Examples of nonlinearities satisfying the hypotheses of Theorem \ref{th3-SV} are the following functions
	\begin{align*}
		g_1(t) &=  |t|^{p-1} \dfrac{1-e^{-|t|^\beta}}{1+ |t|^\alpha} \qquad \text{for any} \; \alpha, \beta>0,\\
		g_2(t) & = \dfrac{|t|^{p+\varepsilon-1}}{(1+|t|)^\alpha}  \qquad  \text{for any} \; \alpha > \varepsilon >0,\\
		g_3(t) & = |t|^{p+\varepsilon-1} e^{-|t|^\beta}  \qquad \text{for any} \; \varepsilon, \beta >0,
	\end{align*} 
	for all $t \in \R$. Note that in all these cases $g(0)=0$, so that $u \equiv 0$ is a solution of \eqref{problem-SV}; hence, by Remark \ref{rem:nontrivial}, Theorem \ref{th3-SV} ensures the existence of at least two nontrivial nonnegative weak solutions for every $\lambda > \lambda^*$.
\end{remark}

	\section*{Acknowledgments}
	\noindent
	The authors are members of the Gruppo Nazionale  per l'Analisi Matematica, la Probabilit\`{a} e le loro Applicazioni  (GNAMPA) of the Istituto Nazionale di Alta Matematica (INdAM).


\end{document}